\theoremstyle{plain}
\newtheorem{lem}{Lemma}[section]
\newtheorem{thm}{Theorem}
\newtheorem{prop}[lem]{Proposition}
\theoremstyle{remark}
\newtheorem{rem}[lem]{Remark}
\newcommand{\ud}{\mathrm{d}}
\providecommand{\abs}[1]{\left\lvert#1\right\rvert} 
\providecommand{\norm}[1]{\left\lVert#1\right\rVert}
\numberwithin{equation}{section}
\DeclareMathOperator{\Div}{div}
\DeclareMathOperator{\Dt}{\frac{\ud}{\ud t}}
\title{Strong solutions to the Navier-Stokes equations on thin 3D domains}
\begin{document}

\author{Bernard Nowakowski}
\author{Wojciech M. Zaj\k aczkowski}
\thanks{Research of both authors is partially supported by Polish KBN grant NN 201 396937}
\address{Bernard Nowakowski\\ Institute of Mathematics\\ Polish Academy of Sciences\\ \'Snia\-deckich 8\\ 00-956 Warsaw\\ Poland}
\email{bernard@impan.pl}
\address{Wojciech M. Zaj\k aczkowski\\ Institute of Mathematics\\ Polish Academy of Sciences\\ \'Sniadeckich 8\\ 00-956 Warsaw\\ Poland\\ and \\ Institute of Mathematics and Crypto\-logy\\ Military University of Technology\\ Kaliskiego 2\\ 00-908 Warsaw\\ Poland}
\email{wz@impan.pl}
\subjclass[2000]{35Q30, 76D05}

\maketitle

\begin{abstract}
	We prove the existence of strong solutions to Navier-Stokes equations in three dimensional thin domains. Our proof is based on the energy and the Poincar\'e inequalities as well as contraction principle argument and is free of the mean value operator. The price we pay for the simplicity of the proof are stronger assumptions on the initial velocity and the forcing term. We need to assume that their derivatives with respect to time belong to certain Lebesgue space.
\end{abstract}

\section{Introduction}

We consider the initial-boundary value problem for the Navier-Stokes equations
\begin{equation}\label{eq1}
	\begin{aligned}
		&v_{,t} + v\cdot \nabla v - \nu \triangle v + \nabla p = f & &\text{in $\Omega_\epsilon\times(t_0,T) =: \Omega^T_{\epsilon}$},\\
		&\Div v = 0 & &\text{in $\Omega_\epsilon^T$}, \\
		&v\vert_{t = t_0} = v(t_0) & &\text{in $\Omega_\epsilon$}.
	\end{aligned}
\end{equation}
The domain $\Omega_\epsilon \subset \mathbb{R}^3$ is assumed to be bounded or unbounded. The subscript $\epsilon$ indicates that the domain in the introduced Cartesian system of coordinates $(x_1,x_2,x_3)$ is thin along $x_3$ direction. We consider two types of boundary conditions on $\partial \Omega^T_{\epsilon}$: the pure Dirichlet condition $v = 0$ or a mixture of periodic and the Dirichlet conditions. A detailed description is given later in this Section.

The study of the Navier-Stokes equations in three dimensions has a long and rich history. It was proved by J. Leray in 1933 that problem \eqref{eq1} has at least one weak solution $v$ in $\mathbb{R}^3$, i.e.
\begin{equation*}
	v \in L_{\infty}(0,T; L_2(\mathbb{R}^3)) \cap L_2(0,T;H^1(\mathbb{R}^3)).
\end{equation*}
Later, in 1952 his result was generalized by E. Hopf to bounded domains in $\mathbb{R}^3$. Since then the research was focused on the problem of uniqueness or regularity of weak solutions. The first results in this direction are existence of regular solutions in the two dimensional case, which was proved by O.A. Ladyzhenskaya in 1959, and in the axially symmetric case without swirl, which was shown independently by O.A. Ladyzhenskaya and M.R. Uhovskij, V.I. Yudovich in 1968. Although many ideas have been demonstrated and various approaches have been suggested, the problem still remains open.

In this paper we also limit our considerations to a particular case when the magnitude of domain is small in one direction. The aim of this paper is to prove the existence of regular solutions:

\begin{thm}\label{t1}
	Suppose that
	\begin{equation*}
		\begin{aligned}
			v(t_0) &\in L_2(\Omega';L_{p_1}(0,\epsilon)) \cap H^1(\Omega_{\epsilon}) & p_1 &> 2, \\
			v_{,t}(t_0) & \in L_2(\Omega_{\epsilon}), & &\\
			f_{,t} &\in L_2(t_0,T;L_{\frac{6}{5}}(\Omega_{\epsilon})), \\
			f &\in L_2(t_0,T;L_{\frac{6}{5}}(\Omega');L_{p_2}(0,\epsilon)) & p_2 &> \frac{6}{5}.
		\end{aligned}
	\end{equation*}
	Then, for sufficiently small $\epsilon$, i.e.
	\begin{equation*}
		\epsilon \sim \frac{1}{\norm{v(t_0)}_{L_2(\Omega';L_{p_1}(0,\epsilon))} + \norm{f}_{L_2(t_0,T;L_{\frac{6}{5}}(\Omega');L_{p_2}(0,\epsilon))}}
	\end{equation*}
	any solution $v$ to problem \eqref{eq1} satisfies
	\begin{multline*}
		\sup_{t\in (t_0,T)} \norm{v_{,t}(t)}^2_{L_2(\Omega_\epsilon)} + \nu \norm{\nabla v}^2_{L_{\infty}(t_0,T;L_2(\Omega^t))} \\
		 \leq c_{\nu,\Omega'} \norm{f_{,t}}_{L_2(t_0,T;L_{\frac{6}{5}}(\Omega_\epsilon))}^2 + \norm{v_{,t}(t_0)}^2_{L_2(\Omega_\epsilon)} + c_{\nu,\Omega'}\left(\norm{v(t_0)}^2_{H^1(\Omega_\epsilon)} + \norm{f}_{L_2(t_0,T;L_{\frac{6}{5}}(\Omega_\epsilon))}^2\right),
	\end{multline*}
	where the subscripts in the constants indicate what the constants are dependent of.
\end{thm}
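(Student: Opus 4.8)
The plan is to derive the a priori estimate by differentiating the Navier-Stokes system in time, testing the resulting equation with $v_{,t}$, and closing the nonlinear term using the thinness of the domain through the anisotropic Poincaré inequality. Let me sketch the main steps.

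The plan is to differentiate the momentum equation in time, set $u := v_{,t}$, and work with the resulting system. Differentiation yields $u_{,t} + v\cdot\nabla u + u\cdot\nabla v - \nu\triangle u + \nabla p_{,t} = f_{,t}$ together with $\Div u = 0$. Testing this equation with $u$ over $\Omega_\epsilon$ and using $\Div v = \Div u = 0$ together with the boundary conditions to annihilate both the pressure term $\int_{\Omega_\epsilon}\nabla p_{,t}\cdot u\,\ud x$ and the transport term $\int_{\Omega_\epsilon}(v\cdot\nabla u)\cdot u\,\ud x$, I obtain the energy identity
\[
	\tfrac12\Dt\norm{u}_{L_2(\Omega_\epsilon)}^2 + \nu\norm{\nabla u}_{L_2(\Omega_\epsilon)}^2 = \int_{\Omega_\epsilon} f_{,t}\cdot u\,\ud x - \int_{\Omega_\epsilon}(u\cdot\nabla v)\cdot u\,\ud x.
\]
For the forcing term I would invoke the Poincar\'e inequality — available under both admissible boundary conditions, which is precisely why no mean-value operator is needed — to get $\norm{u}_{L_6(\Omega_\epsilon)} \le c\norm{\nabla u}_{L_2(\Omega_\epsilon)}$, and then bound $\int_{\Omega_\epsilon} f_{,t}\cdot u\,\ud x \le \norm{f_{,t}}_{L_{6/5}(\Omega_\epsilon)}\norm{u}_{L_6(\Omega_\epsilon)}$ by H\"older, absorbing $\tfrac{\nu}{4}\norm{\nabla u}_{L_2(\Omega_\epsilon)}^2$ via Young's inequality and leaving the term $\tfrac{c}{\nu}\norm{f_{,t}}_{L_{6/5}(\Omega_\epsilon)}^2$ that appears on the right-hand side of the claim.

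The main obstacle is the nonlinear term $\int_{\Omega_\epsilon}(u\cdot\nabla v)\cdot u\,\ud x$; controlling it is where the thinness of $\Omega_\epsilon$ and the anisotropic hypotheses enter. I would integrate by parts (using $\Div u = 0$ and the boundary conditions) to rewrite it as $-\int_{\Omega_\epsilon}(u\cdot\nabla u)\cdot v\,\ud x$, and then estimate it by an anisotropic H\"older inequality that separates the horizontal variable $x'\in\Omega'$ from the thin variable $x_3\in(0,\epsilon)$, pairing $v$ in its $L_2(\Omega';L_{p_1}(0,\epsilon))$ norm against $u$ and $\nabla u$. Because the interval $(0,\epsilon)$ has length $\epsilon$ and $p_1 > 2$, H\"older in $x_3$ produces a factor $\epsilon^{\gamma}$ with $\gamma = \gamma(p_1) > 0$ — this is exactly the role of the hypothesis $p_1 > 2$, since at $p_1 = 2$ the exponent degenerates to $0$ and no smallness is gained. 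Together with the Sobolev–Poincar\'e bound for $u$ this gives
\[
	\abs{\int_{\Omega_\epsilon}(u\cdot\nabla u)\cdot v\,\ud x} \le c\,\epsilon^{\gamma}\norm{v}_{L_2(\Omega';L_{p_1}(0,\epsilon))}\norm{\nabla u}_{L_2(\Omega_\epsilon)}^2.
\]
An energy estimate for $v$ (obtained by testing the original momentum equation with $v$ and again exploiting thinness, which is where $f$ is naturally measured in $L_2(t_0,T;L_{6/5}(\Omega');L_{p_2}(0,\epsilon))$ with $p_2 > 6/5$) bounds $\sup_t\norm{v(t)}_{L_2(\Omega';L_{p_1})}$ by $\norm{v(t_0)}_{L_2(\Omega';L_{p_1})} + \norm{f}_{L_2(t_0,T;L_{6/5}(\Omega');L_{p_2})}$. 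Choosing $\epsilon$ small enough that $c\,\epsilon^{\gamma}(\norm{v(t_0)}_{L_2(\Omega';L_{p_1})} + \norm{f}_{L_2(t_0,T;L_{6/5}(\Omega');L_{p_2})}) \le \tfrac{\nu}{4}$ — which is the content of the schematic smallness condition displayed in the theorem — the nonlinear term is absorbed into the viscous dissipation.

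With both right-hand terms controlled, the energy identity becomes the differential inequality $\Dt\norm{u}_{L_2(\Omega_\epsilon)}^2 + \nu\norm{\nabla u}_{L_2(\Omega_\epsilon)}^2 \le \tfrac{c}{\nu}\norm{f_{,t}}_{L_{6/5}(\Omega_\epsilon)}^2$. Integrating in time from $t_0$ and taking the supremum yields $\sup_t\norm{v_{,t}(t)}_{L_2(\Omega_\epsilon)}^2 + \nu\norm{\nabla v_{,t}}_{L_2(t_0,T;L_2(\Omega_\epsilon))}^2$ bounded by $\norm{v_{,t}(t_0)}_{L_2(\Omega_\epsilon)}^2 + \tfrac{c}{\nu}\norm{f_{,t}}_{L_2(t_0,T;L_{6/5}(\Omega_\epsilon))}^2$, which accounts for the first and third groups of terms in the claim. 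Finally, to recover the remaining quantity $\nu\sup_t\norm{\nabla v(t)}_{L_2(\Omega_\epsilon)}^2$, I would test the original momentum equation with $v_{,t}$: the viscous term produces $\tfrac{\nu}{2}\Dt\norm{\nabla v}_{L_2(\Omega_\epsilon)}^2$, the pressure term vanishes by $\Div v_{,t} = 0$, and the remaining $\int f\cdot v_{,t}$ and nonlinear contributions are handled as above using thinness and the already-established bounds. Integrating from $t_0$ and using $\norm{\nabla v(t_0)}_{L_2(\Omega_\epsilon)}^2 \le \norm{v(t_0)}_{H^1(\Omega_\epsilon)}^2$ brings in the terms $c_{\nu,\Omega'}(\norm{v(t_0)}_{H^1(\Omega_\epsilon)}^2 + \norm{f}_{L_2(t_0,T;L_{6/5}(\Omega_\epsilon))}^2)$, completing the estimate.
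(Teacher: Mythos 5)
Your overall skeleton (differentiate \eqref{eq1} in time, test with $v_{,t}$, kill the pressure and transport terms, handle $\int f_{,t}\cdot v_{,t}$ by H\"older--Young and the $\epsilon$-uniform embedding) matches the paper, but your treatment of the nonlinear term contains two genuine gaps, and it is precisely there that the whole difficulty sits. First, the claimed estimate
\begin{equation*}
	\abs{\int_{\Omega_\epsilon}(u\cdot\nabla u)\cdot v\,\ud x} \leq c\,\epsilon^{\gamma}\norm{v}_{L_2(\Omega';L_{p_1}(0,\epsilon))}\norm{\nabla u}^2_{L_2(\Omega_\epsilon)}
\end{equation*}
does not follow from any H\"older/Sobolev combination: with only $\nabla u \in L_2$ at your disposal, $u\cdot\nabla u$ lies in $L_{3/2}(\Omega_\epsilon)$, so you need $v \in L_3(\Omega_\epsilon)$, and $\norm{v}_{L_3(\Omega_\epsilon)}$ cannot be bounded by $\epsilon^{\gamma}\norm{v}_{L_2(\Omega';L_{p_1}(0,\epsilon))}$ --- the exponent $p_1>2$ buys a power of $\epsilon$ only in the thin variable, while in the horizontal variable this would require $L_3(\Omega')$ to be controlled by $L_2(\Omega')$ on a fixed bounded cross-section, which is false. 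Second, even granting such an inequality, you need $\sup_t\norm{v(t)}_{L_2(\Omega';L_{p_1}(0,\epsilon))}$, whereas the hypothesis gives the mixed norm only at $t=t_0$; the energy estimate you invoke (the paper's Lemma \ref{lem3}) produces $\sup_t\norm{v(t)}_{L_2(\Omega_\epsilon)}$ and $\norm{\nabla v}_{L_2(\Omega^T_\epsilon)}$, i.e.\ isotropic $L_2$-based quantities, and mixed-norm integrability is not propagated by energy methods. Your closing step (testing \eqref{eq1} with $v_{,t}$ to recover $\nu\sup_t\norm{\nabla v(t)}^2_{L_2}$) runs into the same obstruction through the term $\int (v\cdot\nabla v)\cdot v_{,t}$.

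The paper's mechanism is different and avoids both problems: the anisotropic norms of $v(t_0)$ and $f$ are used \emph{only on the data}, via H\"older in $x_3$, to insert the factor $\epsilon$ into the basic energy estimate (Lemma \ref{lem3}), giving $\nu\norm{\nabla v}^2_{L_2(\Omega^T_\epsilon)} \leq \epsilon\, c_{\nu,\Omega'}(\dots)$. The nonlinear term is then estimated \emph{isotropically}: H\"older gives $\norm{v_{,t}}^2_{L_4}\norm{\nabla v}_{L_2}$, interpolation $\norm{v_{,t}}_{L_4}\leq \norm{v_{,t}}^{1/4}_{L_2}\norm{v_{,t}}^{3/4}_{L_6}$ and Young yield $\epsilon_2\norm{v_{,t}}^2_{L_6} + c\norm{v_{,t}}^2_{L_2}\norm{\nabla v}^4_{L_2}$; the surviving term is \emph{not} absorbed pointwise in time but integrated, bounded by $\sup_t\norm{v_{,t}}^2_{L_2}\cdot\norm{\nabla v}^2_{L_\infty(L_2)}\cdot\norm{\nabla v}^2_{L_2(\Omega^T_\epsilon)}$, where the last factor carries the $\epsilon$-smallness and Lemma \ref{lem1} converts $\norm{\nabla v}^2_{L_\infty(L_2)}$ into already-controlled quantities. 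This leaves a \emph{quadratic} inequality of the form $x + y \leq \epsilon c\, x y + b$, which is closed by the contraction-type argument in the final Remark --- a step your linear-absorption scheme has no counterpart for, because your scheme never produces a closable inequality in the first place.
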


\begin{rem}
	Note that the constants which appear on the right-hand side in the estimate in Theorem \ref{t1} do not depend on time. Therefore the solution to problem \eqref{eq1} can be regarded as global in time. 
\end{rem}

The thin domain approach in solving problem \eqref{eq1} has been used by many authors for $21$ years. The inspiration originated from two papers by J. Hale and G. Raugel \cite{hal1, hal2}, who considered the damped wave and the reaction-diffusion equations on thin domains. It was first adopted by G. Raugel and R. Sell who proved in  \cite{rog1} the existence of global and strong solutions to \eqref{eq1} supplemented with mixed boundary conditions: the periodicity was assumed in the thin direction and the zero Dirichlet boundary condition was set on the lateral boundary. They showed (see \cite[Theorem A]{rog1}) that for $v(0)$ and $f$ satisfying
\begin{equation}\label{eq7}
	\begin{aligned}
		v(0) \in \mathcal{R}(\epsilon) \subset \left\{v \in H^1(\Omega_{\epsilon})\colon \Div v = 0, \int_{\Omega_{\epsilon}} v\, \ud x = 0\right\}, \\
		f \in \mathcal{J}(\epsilon) \subset \left\{f \in W^1_{\infty}([0,\infty);L_2(\Omega_{\epsilon}))\colon \int_{\Omega_{\epsilon}} f\, \ud x = 0\right\},
	\end{aligned}
\end{equation}
where $\mathcal{R}(\epsilon)$ and $\mathcal{J}(\epsilon)$ are certain ``large'' sets, there exists a strong solution $v$ to problem \eqref{eq1} such that
\begin{equation}\label{eq6}
	\norm{v(t)}^2_{H^1(\Omega_{\epsilon})} \leq C < \infty,
\end{equation}
for all $t \geq 0$, where $C = C(v(0),f)$. In addition, there exists a constant $K$ such that
\begin{equation*}
	\limsup_{t\to\infty}\norm{v(t)}^2_{H^1(\Omega_{\epsilon})} \leq K < \infty
\end{equation*}
and the constant $K$ does not depend on $v(0)$.

In their next paper (see \cite{rog2}) they considered the case of the purely periodic boundary conditions and proved (see \cite[Theorem A]{rog2}) that for $v(0)$ as above and 
\begin{equation*}
	f \in \mathcal{S}(\epsilon) \subset \left\{f \in L_{\infty}([0,\infty);L_2(\Omega_{\epsilon}))\colon \int_{\Omega_{\epsilon}} f\, \ud x = 0\right\},
\end{equation*}
where $\mathcal{S}(\epsilon)$ is certain ``large'' set, there exists a (unique) strong solution to problem \eqref{eq1}, which satisfies \eqref{eq6} with $C$ not depending on $v(0)$. 

Their technique is based on the vertical mean operator $M_1$, where
\begin{equation*}
	M_{\epsilon} v(x,y,z) = \frac{1}{\epsilon}\int_0^{\epsilon} v(x,y,s)\, \ud s,
\end{equation*} 
which ensures the decomposition of every function $v$ into $M_1v$ and $N_1 := (I - M_1)v$ with the following properties: $M_1v$ does not depend on the variable along the thin direction and $N_1v$ has zero mean in thin direction. This allows to control more precisely the constants in the Sobolev and Poincar\'e inequalities. 

Subsequently, in the middle of $90$' R. Temam and M. Ziane simplified and generalized the results of G. Raugel and R. Sell to various boundary conditions, which involve the periodic, the free boundary and the Dirichlet boundary conditions (see \cite[Theorem A1]{tem}). Is was possible due to the improved Agmon inequality (see \cite[Prop. 2.1, Cor. 2.2 and Cor. 2.3]{tem}) and thorough control of the constants in Sobolev-type inequalities. Also the characterization of the initial and the external data sets become much more clear but the assumptions remained the same: $v_0$ and $f$ belong to $H^1(\Omega)$ and $L_\infty(0,\infty;L_2(\Omega))$, respectively.

A year later together with I. Moise (see \cite{ion}) they proved (see \cite[Theorem 4.1]{ion}) the existence of global and regular solutions in purely periodic case for much larger class of initial data than in \cite{rog2}. In order to clarify what larger class of the initial and the external data mean we recall that the existence of regular solutions in aforementioned articles is guaranteed as long as
\begin{align*}
	\norm{M_{\epsilon}u(0)}_{H^1(\Omega)} &\leq \epsilon^{p_1} \big(\ln\epsilon)\big)^{q_1} \alpha(\epsilon), & & & \sup_t\norm{M_{\epsilon}f(t)}_{L_2(\Omega)} &\leq \epsilon^{p_2} \big(\ln\epsilon)\big)^{q_2} \alpha(\epsilon), \\
		\norm{N_{\epsilon}u(0)}_{H^1(\Omega)} &\leq \epsilon^{p_3} \big(\ln\epsilon)\big)^{q_3} \alpha(\epsilon), & & & \sup_t\norm{N_{\epsilon}f(t)}_{L_2(\Omega)} &\leq \epsilon^{p_4} \big(\ln\epsilon)\big)^{q_4} \alpha(\epsilon),
\end{align*}
where $p_i,q_i$ for $i \in \{1,2,3,4\}$ are certain real constants and $\lim_{\epsilon\to 0}\alpha(\epsilon) \to 0$. For a brief historical overview of the improvements and comparison of these constants we refer the reader e.g. to Introduction in \cite{mon} and \cite{kuk1}. Let us clarify that ``larger class'' means that the powers $p_i$ and $q_i$ are improved in such way that the norms they bound on the left-hand sides can be in fact much larger.

At the same time J. Avrin \cite{avr} considered the case of purely homogeneous Dirichlet boundary conditions and proved (see \cite[Theorems 1.1 and 1.2]{avr}) the existence of global and strong solutions for large data. 
His choice of the boundary conditions allowed him to use different tools than vertical mean operator. His idea was based on the contraction principle argument (same as ours) and detailed analysis of the dependence of solution on the first eigenvalue of the corresponding Laplace operator. 
It is worth noting that the assumptions of Theorem 1.2 are only $v(t_0) \in L_4(\Omega)$ and $f = f_0 + f_1$, $f_0 \in L_{\infty}(0,\infty;H_{\sigma}(\Omega))$ and $f_1 \in L_q(0,\infty;H_{\sigma}(\Omega))$, where $q > \frac{8}{5}$ and
\begin{equation*}
	H_{\sigma} = \overline{\left\{g \in \mathcal{C}^{\infty}_0\colon \Div g = 0\right\}}^{L_2(\Omega)}.
\end{equation*}

The purely periodic case was also deeply examined at the end of 90'. D. Iftimie (\cite{ift0}) proved the existence and uniqueness of solutions for larger class of data by the means of anisotropic Sobolev spaces and the Littlewood-Paley theory. He required that $f \equiv 0$ and 
$M_{\epsilon}u_0 \in L_2(\mathbb{T}^2)$, $N_{\epsilon}u_0' := (N_{\epsilon}u_1(0),N_{\epsilon}u_2(0)) \in H^{\delta}(\mathbb{T}^3)$ and $N_{\epsilon}u_3(0) \in H^{\frac{1}{2} - \delta}$ for $0 < \delta < 1$ (see also Introduction in \cite{mon}). On the other hand S. Montgomery-Smith (\cite[Theorem 1]{mon}) improved the result from \cite{ion}

Through the first decade in the 21st century further improvements of the powers $p_i$ and $q_i$ were made (see e.g. \cite{ift}, \cite{ift2}, \cite{kuk2}, \cite{kuk1}). In \cite{ift} also weaker restriction on the data was imposed, namely $A^{\frac{1}{4}}N_{\epsilon}v_0$ was assumed to belong to certain subspace of $L_2(\Omega)$, where $A$ is the Stokes operator.

Apart from \cite{avr} all mentioned articles use the same idea which is based on the mean value operator. 
In this article we present an alternative approach to the problem of existence of strong solutions to the Navier-Stokes equations. The motivation comes from \cite{zaj1} and \cite{zaj2}. The key tool is the refined energy estimate and the Sobolev embedding theorem. The essential part of the proof takes only a couple of lines but the price we pay is stronger assumption on the data and the forcing term. We should also note that from the final estimate for solution $v$ it follows that $v$ belongs to function space which is much smaller than required by the Serrin condition. It suggests that further improvements are possible.

At the beginning we mentioned that two types of the boundary conditions will be examined. Before we provide further clarification, let us introduce the following short-hand notation:
\begin{align*}
	\Omega_\epsilon &:= \Omega' \times (0,\epsilon), \\
	S &:= \partial \Omega, \\
	S &= S_B \cup S_T \cup S_L, \\
\intertext{where}
	S_B &:= \Omega' \times \{0\}, \\
	S_T &:= \Omega' \times \{\epsilon\}, \\
	S_L &:= \partial \Omega' \times (0,\epsilon)
\end{align*}
and $\Omega' \subset \mathbb{R}^2$ is a bounded and open subset with the boundary of $\mathcal{C}^2$. The subscripts $B$, $T$ and $L$ denote the bottom, the top and the lateral part of the boundary. We see that the domain $\Omega_\epsilon$ is of cylindrical type, which is placed alongside $x_3$-axis. The boundary conditions, we use to supplement problem \eqref{eq1} are of the form:
\begin{description}
	\item[\textbf{Case 1}] We put simply $v = 0$ on the whole boundary, which relaxes the restriction on the domain. The only requirement now is that it has to be thin in $x_3$ direction, which in particular implies that it is no longer assumed to be of cylindrical type.
	\item[\textbf{Case 2}] The domain to be considered is of cubical type, $\Omega_\epsilon = (0,l_1)\times(0,l_2)\times(0,\epsilon)$. On the top and the bottom we put $v = 0$, whereas on the side walls we assume periodicity with periods equal to $l_1$ and $l_2$.
\end{description}
The choice for boundary conditions is tightly linked to the Poincar\'e inequality, which is crucial in our approach. Although for other choices of boundary conditions the Poincar\'e inequality does not hold in general but it is still possible to prove the existence of strong solutions. We will demonstrate it in the forthcoming paper. 

\section{Auxiliary results}

\

We recall that in \cite[Prop. 2.1]{tem} the following Poincar\'e inequality was proved
\begin{prop}\label{prop1}
	Suppose that $v \in H^1(\Omega_\epsilon)$ satisfy one of the following conditions:
	\begin{equation*}
		\begin{aligned}
			&(i) & &v = 0 & &\text{on $S_B$}, \\
			&(ii) & &v = 0 & &\text{on $S_T$}.
		\end{aligned}
	\end{equation*}
	Then
	\begin{equation*}
		\norm{v}_{L_2(\Omega_\epsilon)} \leq \epsilon \norm{v_{,x_3}}_{L_2(\Omega_\epsilon)}.
	\end{equation*}
\end{prop}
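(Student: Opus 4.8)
The plan is to reduce the statement to a one-dimensional Poincar\'e inequality along the thin direction $x_3$ and then integrate over the cross-section $\Omega'$. For almost every $x' = (x_1,x_2) \in \Omega'$ the slice $s \mapsto v(x',s)$ belongs to $H^1(0,\epsilon)$, hence is absolutely continuous on $[0,\epsilon]$, and the boundary condition in $(i)$ or $(ii)$ gives $v(x',0) = 0$ or $v(x',\epsilon) = 0$ for almost every such $x'$. This is the only point requiring a little care; it follows from Fubini's theorem together with the characterization of the trace of an $H^1$ function, and I regard it as routine rather than a genuine obstacle.

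In Case $(i)$ I would write, for a fixed admissible $x'$ and any $x_3 \in (0,\epsilon)$,
\[
	v(x',x_3) = \int_0^{x_3} v_{,x_3}(x',s)\, \ud s,
\]
using the fundamental theorem of calculus and $v(x',0) = 0$. Applying the Cauchy--Schwarz inequality and then enlarging the interval of integration yields
\[
	\abs{v(x',x_3)}^2 \leq x_3 \int_0^{x_3} \abs{v_{,x_3}(x',s)}^2\, \ud s \leq \epsilon \int_0^{\epsilon} \abs{v_{,x_3}(x',s)}^2\, \ud s,
\]
where I used $x_3 \leq \epsilon$ and positivity of the integrand. Since the right-hand side no longer depends on $x_3$, integrating in $x_3$ over $(0,\epsilon)$ produces the factor $\epsilon$ a second time,
\[
	\int_0^{\epsilon} \abs{v(x',x_3)}^2\, \ud x_3 \leq \epsilon^2 \int_0^{\epsilon} \abs{v_{,x_3}(x',s)}^2\, \ud s.
\]

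Integrating this inequality over $\Omega'$ and invoking Fubini's theorem gives precisely $\norm{v}^2_{L_2(\Omega_\epsilon)} \leq \epsilon^2 \norm{v_{,x_3}}^2_{L_2(\Omega_\epsilon)}$, which is the asserted bound after taking square roots. Case $(ii)$ is identical once one starts from the representation $v(x',x_3) = -\int_{x_3}^{\epsilon} v_{,x_3}(x',s)\, \ud s$ instead: the length of the interval of integration is $\epsilon - x_3 \leq \epsilon$, so the same two factors of $\epsilon$ reappear. As the estimate is merely a chain of elementary inequalities, there is no real difficulty; the whole content lies in the single-variable integration combined with the vanishing trace.
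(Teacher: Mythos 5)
Your proof is correct. Note that the paper itself does not prove this Proposition: it is recalled verbatim from \cite{tem} (Prop.\ 2.1 there), so there is no internal proof to compare against. Your argument --- writing $v(x',x_3)=\int_0^{x_3} v_{,x_3}(x',s)\,\ud s$ on almost every slice via the vanishing trace, applying Cauchy--Schwarz to get $\abs{v(x',x_3)}^2 \leq \epsilon \int_0^{\epsilon}\abs{v_{,x_3}(x',s)}^2\,\ud s$, and integrating in $x_3$ and then over $\Omega'$ --- is the standard slicing proof and is essentially the argument behind the cited result, including the symmetric treatment of case $(ii)$; the constant $\epsilon$ you obtain matches the statement exactly.
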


We also need the Sobolev embedding $H^1 \hookrightarrow L_6$, which according to \cite[Rem. 2.1]{tem} is of the form
\begin{prop}
	Let $\Omega_\epsilon = \Omega'\times (0,\epsilon)$, where $\Omega' \in \mathcal{C}^2$. Then for all $v\in H^1(\Omega_\epsilon)$ we have
	\begin{equation*}
		\norm{v}_{L_6(\Omega_\epsilon)} \leq c_{\Omega'} \left(\frac{1}{\epsilon}\norm{v}_{L_2(\Omega_\epsilon)} + \norm{v_{,x_3}}_{L_2(\Omega_\epsilon)}\right)^{\frac{1}{3}}\left(\norm{v}_{L_2(\Omega_\epsilon)} + \norm{v_{,x_1}}_{L_2(\Omega_\epsilon)} + \norm{v_{,x_2}}_{L_2(\Omega_\epsilon)}\right)^{\frac{2}{3}}.
	\end{equation*}
\end{prop}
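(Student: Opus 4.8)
The inequality is the anisotropic Gagliardo--Nirenberg--Sobolev embedding, in which the thin direction $x_3$ carries the exponent $\tfrac13$ and the two horizontal directions $x_1,x_2$ jointly carry $\tfrac23$. The plan is to isolate the thin direction by a one--dimensional argument (the same averaging device that underlies Proposition \ref{prop1}) and to treat the horizontal slices $\{x_3=\text{const}\}$ by the two--dimensional Sobolev embedding $H^1(\Omega')\hookrightarrow L_6(\Omega')$, finally splicing the two by H\"older's inequality in $x_3$.

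First I would write, for a.e.\ fixed $x'=(x_1,x_2)\in\Omega'$, the elementary one--dimensional identity on the interval $(0,\epsilon)$: averaging $\abs{v(x',x_3)}^2-\abs{v(x',s)}^2=\int_s^{x_3}2v\,v_{,x_3}\,\ud\tau$ over $s\in(0,\epsilon)$ yields the pointwise bound $\sup_{x_3}\abs{v(x',x_3)}^2\le\tfrac1\epsilon\int_0^\epsilon\abs{v}^2\,\ud x_3+2\int_0^\epsilon\abs{v}\,\abs{v_{,x_3}}\,\ud x_3$. This is precisely the mechanism of Proposition \ref{prop1}; it is the source both of the weight $\tfrac1\epsilon$ and of the dependence on $v_{,x_3}$, and crucially it keeps $v_{,x_3}$ inside an $L_2$--type quantity, so that no second derivatives are ever produced. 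Integrating in $x'$ and using the Cauchy--Schwarz inequality I would deduce that the $L_2(\Omega')$--mass of the slices is controlled, in the supremum norm in $x_3$, by $\big(\tfrac1\epsilon\norm{v}_{L_2(\Omega_\epsilon)}+\norm{v_{,x_3}}_{L_2(\Omega_\epsilon)}\big)$, which is exactly the first factor on the right--hand side.

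Next, writing $\norm{v}_{L_6(\Omega_\epsilon)}^6=\int_0^\epsilon\norm{v(\cdot,x_3)}_{L_6(\Omega')}^6\,\ud x_3$ and applying the two--dimensional inequality $\norm{w}_{L_6(\Omega')}\le c_{\Omega'}\norm{w}_{L_2(\Omega')}^{1/3}\norm{w}_{H^1(\Omega')}^{2/3}$ to each slice $w=v(\cdot,x_3)$, I would obtain an integrand of the form $\norm{v(\cdot,x_3)}_{L_2(\Omega')}^2\norm{v(\cdot,x_3)}_{H^1(\Omega')}^4$. The two horizontal derivatives $v_{,x_1},v_{,x_2}$ enter here only through $\norm{\cdot}_{H^1(\Omega')}$ and assemble into the second factor $\big(\norm{v}_{L_2(\Omega_\epsilon)}+\norm{v_{,x_1}}_{L_2(\Omega_\epsilon)}+\norm{v_{,x_2}}_{L_2(\Omega_\epsilon)}\big)^{2/3}$ after the $x_3$--integration; the slice $L_2(\Omega')$--norm is absorbed into the first factor via the supremum bound of the previous step, while the remaining $x_3$--integral of the $H^1(\Omega')$--norm is estimated by H\"older's inequality in $x_3$ so as to produce $\norm{v}_{L_2((0,\epsilon);H^1(\Omega'))}$, i.e.\ the horizontal $L_2(\Omega_\epsilon)$--data. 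Arranging these so that the exponents come out as $\tfrac13$ and $\tfrac23$ is the content of the H\"older step.

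I expect the main obstacle to be the precise bookkeeping in this last splicing: the two--dimensional embedding is scale--critical and forces the power $4$ on $\norm{v(\cdot,x_3)}_{H^1(\Omega')}$, so one must distribute the $x_3$--integration between the supremum--controlled $L_2(\Omega')$ factor and the integrated $H^1(\Omega')$ factor in exactly the right proportion, keeping $v_{,x_3}$ in $L_2$ throughout and never differentiating it in the horizontal variables. A clean way to organise --- and to check --- this bookkeeping is to rescale $x_3=\epsilon y_3$ onto the fixed slab $\Omega'\times(0,1)$: the change of variables multiplies the two sides by matching powers of $\epsilon$, the factor $\epsilon^{-1/6}$ coming from the $L_6$ norm cancelling against $(\epsilon^{1/2})^{1/3}(\epsilon^{-1/2})^{2/3}=\epsilon^{-1/6}$ on the right, so that the assertion is equivalent to the $\epsilon$--free anisotropic Sobolev inequality on $\Omega'\times(0,1)$, on which all the estimates above may be carried out without tracking the weight.
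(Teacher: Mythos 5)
The paper does not actually prove this Proposition --- it is quoted verbatim from Temam--Ziane \cite[Rem.~2.1]{tem} --- so the only question is whether your argument is sound. Your final rescaling step is correct (your $\epsilon$-bookkeeping checks out: both sides scale like $\epsilon^{1/6}$, so the statement is indeed equivalent to an $\epsilon$-free anisotropic inequality on $\Omega'\times(0,1)$, and this dilation is exactly how the $\tfrac1\epsilon$-weight arises in \cite{tem}). The genuine gap is in the core splicing argument on the slab, and it is more than ``bookkeeping'': the slicewise 2D embedding $\norm{w}_{L_6(\Omega')}\le c\norm{w}_{L_2(\Omega')}^{1/3}\norm{w}_{H^1(\Omega')}^{2/3}$ applied to the sixth power produces the integrand $\norm{u(\cdot,x_3)}_{L_2(\Omega')}^2\norm{u(\cdot,x_3)}_{H^1(\Omega')}^4$, and there is no admissible way to handle the fourth power. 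The 1D sup device controls only $\sup_{x_3}\norm{u(\cdot,x_3)}_{L_2(\Omega')}$; a sup of the slice $H^1(\Omega')$-norm would require the mixed derivatives $u_{,x_ix_3}\in L_2$, i.e.\ $H^2$ data. And H\"older in $x_3$ runs the wrong way: you need to bound an $L_4(0,1)$-norm of $x_3\mapsto\norm{u(\cdot,x_3)}_{H^1(\Omega')}$ by its $L_2(0,1)$-norm, which fails. Nor does any redistribution of exponents rescue the scheme: if you assign $2\theta$ of the six powers of $\abs{u}$ to the $x_3$-sup and apply 2D Gagliardo--Nirenberg to the remaining $6-2\theta$, the $x_3$-integrated power of the slice $H^1(\Omega')$-norm always exceeds $2$, unless you instead put $u_{,x_3}$ into $L_p(\Omega')$ with $p>2$ horizontally --- also unavailable for $u\in H^1$. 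So the route ``1D sup in $x_3$ plus 2D Sobolev on slices'' cannot close; your own flagged ``main obstacle'' is in fact fatal to this factorization.

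A correct proof must treat all three directions one-dimensionally, not two of them through a scale-critical 2D embedding. The standard mechanism is Ladyzhenskaya's anisotropic inequality $\norm{u}_{L_6(\mathbb{R}^3)}\le c\,\norm{u_{,x_1}}_{L_2}^{1/3}\norm{u_{,x_2}}_{L_2}^{1/3}\norm{u_{,x_3}}_{L_2}^{1/3}$, proved by three one-dimensional sup bounds (each of the flavor you used for $x_3$) combined by iterated H\"older --- the Gagliardo trick --- applied to $\abs{u}^6=\abs{u}^2\abs{u}^2\abs{u}^2$. To use it on the slab one first extends: reflection across $x_3=0$ and $x_3=1$, then a Stein-type extension in $x'$, which acts slicewise, commutes with $\partial_{x_3}$, and hence preserves the anisotropic norms; this is where $\Omega'\in\mathcal{C}^2$ and the constant $c_{\Omega'}$ enter. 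Grouping the three homogeneous factors via
\begin{equation*}
	\prod_{i=1}^{3}\bigl(\norm{u}_{L_2}+\norm{u_{,x_i}}_{L_2}\bigr)^{1/3}\le\bigl(\norm{u}_{L_2}+\norm{u_{,x_3}}_{L_2}\bigr)^{1/3}\bigl(\norm{u}_{L_2}+\norm{u_{,x_1}}_{L_2}+\norm{u_{,x_2}}_{L_2}\bigr)^{2/3}
\end{equation*}
and then undoing your (correct) dilation yields the Proposition. So: keep your first step and your scaling reduction, but replace the middle of the argument; as written, the proof has a genuine hole exactly where the exponents $\tfrac13$ and $\tfrac23$ are supposed to be produced.
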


Combining these two above Propositions we get immediately
\begin{rem}\label{rem1}
	For all $v \in H^1(\Omega_\epsilon)$, $\Omega' \in \mathcal{C}^2$ the estimate
	\begin{equation}\label{eq5}
		\norm{v}_{L_6(\Omega_\epsilon)}^2 \leq c_{\Omega'} \norm{v}_{H^1(\Omega_\epsilon)}^2
	\end{equation}	
	holds.
\end{rem}
Next, we derive the fundamental energy estimate for weak solutions to \eqref{eq1}. This time we take into account the thickness of the domain along $x_3$-variable.
\begin{lem}\label{lem3}
	Suppose that
	\begin{equation*}
		\begin{aligned}
			v(t_0) &\in L_2(\Omega';L_{p_1}(0,\epsilon)) & p_1 &> 2, \\
			f &\in L_2(t_0,T;L_{\frac{6}{5}}(\Omega');L_{p_2}(0,\epsilon)) & p_2 &> \frac{6}{5}.
		\end{aligned}
	\end{equation*}
	Then $v \in L_{\infty}(t_0,T;L_2(\Omega_{\epsilon}))\cap L_2(t_0;H^1(\Omega_{\epsilon}))$ and
	\begin{equation*}
		\sup_{t \in (t_0,T)} \norm{v(t)}^2_{L_2(\Omega_{\epsilon})} + \nu\norm{\nabla v}^2_{L_2(\Omega^T_{\epsilon})} \leq \epsilon c_{\nu,\Omega'} \left(\norm{v(t_0)}_{L_2(\Omega';L_{p_1}(0,\epsilon))}^2 + \norm{f}_{L_2(t_0,T;L_{\frac{6}{5}}(\Omega');L_{p_2}(0,\epsilon))}^2\right)
	\end{equation*}
	holds.
\end{lem}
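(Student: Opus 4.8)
The plan is to derive the standard energy estimate for the Navier-Stokes equations, but to track the $\epsilon$-dependence carefully using the anisotropic structure of the norms on the data. First I would test the momentum equation \eqref{eq1} with $v$ itself and integrate over $\Omega_\epsilon$. The pressure term $\nabla p$ drops out after integration by parts because $\Div v = 0$ and $v$ vanishes on the Dirichlet part of the boundary (in Case 2 the periodic contributions cancel). The convective term $v\cdot\nabla v$ integrates to zero against $v$ for the same reason. This yields the differential identity $\tfrac12\Dt\norm{v(t)}^2_{L_2(\Omega_\epsilon)} + \nu\norm{\nabla v(t)}^2_{L_2(\Omega_\epsilon)} = \int_{\Omega_\epsilon} f\cdot v\,\ud x$, which after integration in time from $t_0$ to $t$ gives
\begin{equation*}
	\tfrac12\norm{v(t)}^2_{L_2(\Omega_\epsilon)} + \nu\int_{t_0}^{t}\norm{\nabla v}^2_{L_2(\Omega_\epsilon)}\,\ud\tau = \tfrac12\norm{v(t_0)}^2_{L_2(\Omega_\epsilon)} + \int_{t_0}^{t}\int_{\Omega_\epsilon} f\cdot v\,\ud x\,\ud\tau.
\end{equation*}

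The real content is to bound the two right-hand terms by the anisotropic data norms times a factor of $\epsilon$, and to absorb the $v$-factors into the dissipation. For the forcing term I would use Hölder in the $x_3$-variable to pass from $L_2$ in $x_3$ to the $L_{p_2}(0,\epsilon)$ norm, picking up a power of $\epsilon$ from the conjugate exponent on the interval $(0,\epsilon)$; then Hölder in $\Omega'$ pairing $L_{6/5}$ against $L_6$, and finally the embedding \eqref{eq5} from Remark \ref{rem1} to replace $\norm{v}_{L_6}$ by $\norm{v}_{H^1}$. Combined with Young's inequality this splits off a small constant times $\nu\norm{\nabla v}^2$ (controlled, via the Poincaré inequality of Proposition \ref{prop1}, by $\norm{v}_{H^1}^2$ up to $\epsilon$-factors so the full $H^1$ norm is dominated by the gradient) that gets absorbed on the left, leaving $c_{\nu,\Omega'}\epsilon\norm{f}^2_{L_2(t_0,T;L_{6/5}(\Omega');L_{p_2}(0,\epsilon))}$. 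The initial-data term $\norm{v(t_0)}^2_{L_2(\Omega_\epsilon)}$ is handled the same way: Hölder in $x_3$ converts $L_2(0,\epsilon)$ into $L_{p_1}(0,\epsilon)$ at the cost of $\epsilon^{1-2/p_1}$, yielding the bound $\epsilon\,c\norm{v(t_0)}^2_{L_2(\Omega';L_{p_1}(0,\epsilon))}$.

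I expect the main obstacle to be bookkeeping the exponent of $\epsilon$ correctly so that a single clean factor of $\epsilon$ emerges in front of both data norms, as claimed in the statement. The delicate point is that the Hölder step in $x_3$ produces powers like $\epsilon^{1-2/p_1}$ and $\epsilon^{5/6-1/p_2}$ rather than $\epsilon^1$, so one must either invoke the thinness $\epsilon\le 1$ to dominate these by a first power of $\epsilon$, or more honestly verify that, after also accounting for the $1/\epsilon$ that appears in the Sobolev embedding \eqref{eq5} and the $\epsilon$ in the Poincaré inequality, the powers combine to give exactly $\epsilon$. Getting the absorption constant to come out strictly less than $1$ uniformly, so that it can be moved to the left-hand side without spoiling the $\epsilon$-scaling, is the step that requires the most care. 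Once the two terms are bounded and the dissipation absorbed, taking the supremum over $t\in(t_0,T)$ on the left gives both asserted quantities simultaneously and completes the proof.
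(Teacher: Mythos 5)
Your proposal follows essentially the same route as the paper's proof: test with $v$ to get the energy identity, estimate $\int f\cdot v$ by H\"older and Young with the $L_6$--$L_{6/5}$ pairing and the embedding \eqref{eq5}, absorb the resulting $\norm{v}^2_{H^1(\Omega_\epsilon)}$ term into the dissipation via Proposition \ref{prop1}, integrate in time, and finally apply H\"older in the $x_3$-variable to trade the isotropic data norms for the anisotropic ones at the cost of a positive power of $\epsilon$. Your observation that this last step actually yields powers such as $\epsilon^{1-2/p_1}$ rather than a clean $\epsilon^1$ is well taken --- the paper's own proof records the factor as $\epsilon^{1/q}$ and then states the lemma with a bare $\epsilon$ without comment, so you are, if anything, more careful on this point than the original.
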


\begin{proof}
	We multiply \eqref{eq1}$_1$ by $v$ and integrate over $\Omega$. By \eqref{eq1}$_2$ and the boundary conditions we obtain
	\begin{equation*}
		\frac{1}{2}\Dt \int_{\Omega} \abs{v(t)}^2\, \ud x + \nu\norm{\nabla v}^2_{L_2(\Omega)} = \int_{\Omega} f\cdot v\, \ud x.
	\end{equation*}
	By the H\"older, the Young with $\epsilon$ inequalities and in view of Remark \ref{rem1} we see
	\begin{equation*}
		\int_{\Omega} f\cdot v\, \ud x \leq \norm{v}_{L_6(\Omega)}\norm{f}_{L_{\frac{6}{5}}(\Omega)} \leq \epsilon\norm{v}_{L_6(\Omega)}^2 + \frac{1}{4\epsilon}\norm{f}_{L_{\frac{6}{5}}(\Omega)}^2 \leq \epsilon c_{\Omega'}\norm{v}_{H^1(\Omega)}^2 + \frac{1}{4\epsilon}\norm{f}_{L_{\frac{6}{5}}(\Omega)}^2.
	\end{equation*}
	By Proposition \ref{prop1} we have
	\begin{equation*}
		c_{\Omega'}\norm{v}_{H^1(\Omega)}^2 \leq 2c_{\Omega'}\norm{\nabla v}_{L_2(\Omega)}^2.
	\end{equation*}
	Finally, for $\epsilon = \frac{\nu}{4c_{\Omega'}}$ we get
	\begin{equation*}
		\frac{1}{2}\Dt \int_{\Omega} \abs{v(t)}^2\, \ud x + \frac{\nu}{2}\norm{\nabla v}^2_{L_2(\Omega)} \leq \frac{c_{\Omega'}}{\nu} \norm{f}_{L_{\frac{6}{5}}(\Omega)}^2.
	\end{equation*}
	Multiplying by $2$ and integrating with respect to $t$ gives
	\begin{equation*}
		\sup_{t \in (t_0,T)} \norm{v(t)}^2_{L_2(\Omega)} + \nu\norm{\nabla v}^2_{L_2(\Omega^t)} \leq \frac{2c_{\Omega'}}{\nu} \norm{f}_{L_2(t_0,T;L_{\frac{6}{5}}(\Omega))}^2 + \norm{v(t_0)}^2_{L_2(\Omega)}.
	\end{equation*}
	From the assumption and the H\"older inequality it follows that
	\begin{multline*}
		\norm{v(t_0)}^2_{L_2(\Omega)} = \int_{\Omega} v_{t_0}^2\, \ud x = \int_{\Omega'}\int_0^{\epsilon} v_{t_0}^2(x',x_3)\, \, \ud x_3\ud x' \\
		\leq \int_{\Omega'}\left(\int_0^{\epsilon} v_{t_0}^{2p}(x',x_3)\, \, \ud x_3\right)^{\frac{1}{p}} \left(\int_0^{\epsilon} 1\, \ud x_3\right)^{\frac{1}{q}}\ud x' = \epsilon^{\frac{1}{q}} \norm{v(t_0)}_{L_2(\Omega';L_{2p}(0,\epsilon))}^2,
	\end{multline*}
	where $\frac{1}{p} + \frac{1}{q} = 1$, $p > 1$.
	In the same manner
	\begin{equation*}
		\norm{f}_{L_2(t_0,T;L_{\frac{6}{5}}(\Omega))}^2 \leq \epsilon^{\frac{1}{q}}\norm{f}_{L_2(t_0,T;L_{\frac{6}{5}}(\Omega');L_{\frac{6}{5}p}(0,\epsilon))}^2.
	\end{equation*}
	This ends the proof.
\end{proof}

The last tool is purely technical:
\begin{lem}\label{lem1}
	Let $\Omega \subset \mathbb{R}^3$ be a bounded and open subset. Suppose that $\nabla v_{,t} \in L_2(\Omega^t)$. Then $\nabla v \in L_\infty(0,t;L_2(\Omega))$ and the inequality
	\begin{equation*}
		\norm{\nabla v}^2_{L_\infty(0,t;L_2(\Omega))} \leq \norm{\nabla v}^2_{L_2(\Omega^t)} + \norm{\nabla v_{,t}}^2_{L_2(\Omega^t)} + \norm{\nabla v(0)}^2_{L_2(\Omega)}
	\end{equation*}
	holds.
\end{lem}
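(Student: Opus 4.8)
The plan is to deduce the estimate from the fundamental theorem of calculus applied to the scalar function $g(s) := \norm{\nabla v(s)}_{L_2(\Omega)}^2$. The crucial observation is that, since both $\nabla v$ and $\nabla v_{,t}$ belong to $L_2(\Omega^t) = L_2(0,t;L_2(\Omega))$, the standard lemma on the time regularity of functions whose distributional time-derivative is square integrable (see e.g. Temam, \emph{Navier-Stokes Equations}, or Lions--Magenes) guarantees that $\nabla v$, after modification on a set of times of measure zero, coincides with a function in $\mathcal{C}([0,t];L_2(\Omega))$. In particular $g$ is well defined for every $s$, is absolutely continuous on $[0,t]$, and
\begin{equation*}
	\frac{\ud}{\ud s}\, g(s) = 2\int_{\Omega} \nabla v \cdot \nabla v_{,t}\, \ud x \qquad \text{for a.e. } s\in(0,t).
\end{equation*}
This identity is the only non-elementary ingredient and is where the hypothesis $\nabla v_{,t}\in L_2(\Omega^t)$ is genuinely used; the finiteness of $\norm{\nabla v}_{L_2(\Omega^t)}$ on the right-hand side is what makes this continuous representative (and hence the value $\nabla v(0)$) available.

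Granting the identity, I would integrate it in time from $0$ to an arbitrary $s\in(0,t)$, obtaining
\begin{equation*}
	\norm{\nabla v(s)}_{L_2(\Omega)}^2 = \norm{\nabla v(0)}_{L_2(\Omega)}^2 + 2\int_0^s\!\!\int_{\Omega} \nabla v\cdot\nabla v_{,t}\, \ud x\, \ud\sigma.
\end{equation*}
The double integral is then bounded by extending the $\sigma$-range to $(0,t)$ and applying the Cauchy--Schwarz inequality in $(x,\sigma)$ together with Young's inequality $2ab\le a^2+b^2$:
\begin{equation*}
	2\int_0^s\!\!\int_{\Omega} \nabla v\cdot\nabla v_{,t}\, \ud x\, \ud\sigma \leq \int_0^t\!\!\int_{\Omega}\abs{\nabla v}^2\, \ud x\, \ud\sigma + \int_0^t\!\!\int_{\Omega}\abs{\nabla v_{,t}}^2\, \ud x\, \ud\sigma = \norm{\nabla v}_{L_2(\Omega^t)}^2 + \norm{\nabla v_{,t}}_{L_2(\Omega^t)}^2.
\end{equation*}
Since the resulting bound is independent of $s$, taking the supremum over $s\in(0,t)$ yields the claimed inequality and simultaneously shows $\nabla v\in L_\infty(0,t;L_2(\Omega))$.

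The essential step—and the only possible obstacle—is the justification of the differentiation formula and of the existence of a time-continuous representative. If one prefers a self-contained argument rather than invoking the abstract regularity lemma, the same conclusion can be reached by a mollification-in-time argument: one replaces $\nabla v$ by its time-mollification $\nabla v_\delta$, for which the identity above holds by elementary calculus, derives the inequality for $\nabla v_\delta$, and passes to the limit $\delta\to 0$ using $\nabla v_\delta\to\nabla v$ and $(\nabla v_\delta)_{,t}=(\nabla v_{,t})_\delta\to\nabla v_{,t}$ in $L_2(\Omega^t)$. Everything after the differentiation identity is an entirely routine Cauchy--Schwarz--Young estimate, consistent with the lemma being labelled ``purely technical''.
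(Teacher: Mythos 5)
Your proposal is correct and follows essentially the same route as the paper: differentiate $\norm{\nabla v(s)}_{L_2(\Omega)}^2$ in time to get $2\int_{\Omega}\nabla v\cdot\nabla v_{,t}\,\ud x$, bound it by Cauchy--Schwarz and Young, and integrate in time. The only difference is that you carefully justify the differentiation identity and the continuous-in-time representative (via the Lions--Magenes lemma or mollification), which the paper's three-line proof takes for granted.
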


\begin{proof}
	Observe that
	\begin{equation*}
		\Dt \int_{\Omega} \nabla v\cdot \nabla v\, \ud x \leq \abs{\Dt \int_{\Omega} \nabla v\cdot \nabla v\, \ud x} = 2 \abs{\int_{\Omega} \nabla v\cdot \nabla v_{,t}\, \ud x} \leq \norm{\nabla v}^2_{L_2(\Omega)} + \norm{\nabla v_{,t}}^2_{L_2(\Omega^t)}.
	\end{equation*}
	Integrating with respect to time ends the proof.
\end{proof}

\section{Proof of Theorem \ref{t1}}
Using Lemma \ref{lem1} and the assertion of Remark \ref{rem1} we can give the proof of Theorem \ref{t1}.

\begin{proof}[Proof of Theorem \ref{t1}]
	First we differentiate \eqref{eq1} with respect to time. It gives
	\begin{equation}\label{eq2}
		\begin{aligned}
			&v_{,tt} + v_{,t}\cdot \nabla v + v\cdot \nabla v_{,t} - \nu \triangle v_{,t} + \nabla p_{,t} = f_{,t} & &\text{in $\Omega_\epsilon^t$}, \\
			&\Div v_{,t} = 0 & &\text{in $\Omega_\epsilon^t$}.
		\end{aligned}
	\end{equation}
	Multiplying \eqref{eq2}$_1$ by $v_{,t}$ and integrating by parts and using \eqref{eq1}$_2$ gives
	\begin{equation*}
		\frac{1}{2}\Dt \int_{\Omega_\epsilon} \abs{v_{,t}}^2\, \ud x + \nu \int_{\Omega_\epsilon}\abs{\nabla v_{,t}}^2\, \ud x = \int_{\Omega_\epsilon} f_{,t}\cdot v_{,t}\, \ud x - \int_{\Omega_\epsilon} v_{,t}\cdot \nabla v \cdot v_{,t}\, \ud x.
	\end{equation*}
	To estimate the first term on the right hand side we use the H\"older and the Young inequalities
	\begin{equation}\label{eq3}
		 \int_{\Omega_\epsilon} f_{,t}\cdot v_{,t}\, \ud x \leq \norm{f_{,t}}_{L_{\frac{6}{5}}(\Omega_\epsilon)}\norm{v_{,t}}_{L_6(\Omega_\epsilon)} \leq \epsilon_1\norm{v_{,t}}_{L_6(\Omega_\epsilon)}^2 + \frac{1}{4\epsilon_1}\norm{f_{,t}}_{L_{\frac{6}{5}}(\Omega_\epsilon)}^2.
	\end{equation}
	For the second term we use the H\"older inequality
	\begin{equation*}
		\int_{\Omega_\epsilon} v_{,t}\cdot \nabla v \cdot v_{,t}\, \ud x \leq \norm{v_{,t}}_{L_4(\Omega_\epsilon)}\norm{\nabla v}_{L_2(\Omega_\epsilon)}\norm{v_{,t}}_{L_4(\Omega_\epsilon)}.
	\end{equation*}
	For $L_p$-spaces we have the interpolation inequality
	\begin{equation*}
		\norm{v_{,t}}_{L_r(\Omega_\epsilon)} \leq \norm{v_{,t}}^\theta_{L_s(\Omega_\epsilon)}\norm{v_{,t}}^{1 - \theta}_{L_p(\Omega_\epsilon)},
	\end{equation*}
	where $s \leq r \leq p$ and
	\begin{equation*}
		\frac{1}{r} = \frac{\theta}{s} + \frac{1 - \theta}{p}.
	\end{equation*}
	Setting $r = 4$, $s = 2$ and $p = 6$ we obtain $\theta = \frac{1}{4}$, which justifies the inequality
	\begin{equation*}
		\norm{v_{,t}}_{L_4(\Omega_\epsilon)}\norm{\nabla v}_{L_2(\Omega_\epsilon)}\norm{v_{,t}}_{L_4(\Omega_\epsilon)} \leq \norm{v_{,t}}_{L_2(\Omega_\epsilon)}^{\frac{1}{2}}\norm{v_{,t}}_{L_6(\Omega_\epsilon)}^{\frac{3}{2}}\norm{\nabla v}_{L_2(\Omega_\epsilon)}.
	\end{equation*}
	From the Young inequality it follows that
	\begin{equation}\label{eq4}
		\norm{v_{,t}}_{L_2(\Omega_\epsilon)}^{\frac{1}{2}}\norm{v_{,t}}_{L_6(\Omega_\epsilon)}^{\frac{3}{2}}\norm{\nabla v}_{L_2(\Omega_\epsilon)} \leq \epsilon_2 \norm{v_{,t}}_{L_6(\Omega_\epsilon)}^2 + \frac{1}{4\epsilon_2} \norm{v_{,t}}_{L_2(\Omega_\epsilon)}^2\norm{\nabla v}_{L_2(\Omega_\epsilon)}^4.
	\end{equation}
	From \eqref{eq3}, \eqref{eq4} and in view of Remark \ref{rem1} we get that
	\begin{equation*}
		\Dt \int_{\Omega_\epsilon} \abs{v_{,t}}^2\, \ud x + \nu \int_{\Omega_\epsilon}\abs{\nabla v_{,t}}^2\, \ud x \leq c_{\nu,\Omega'} \norm{f_{,t}}_{L_{\frac{6}{5}}(\Omega_\epsilon)}^2 + c_{\nu,\Omega'} \norm{v_{,t}}_{L_2(\Omega_\epsilon)}^2\norm{\nabla v}_{L_2(\Omega_\epsilon)}^4.
	\end{equation*}
	Integrating with respect to $t \in (t_0,T)$ yields
	\begin{multline}\label{eq13}
		\sup_{t\in (t_0,T)} \norm{v_{,t}(t)}^2_{L_2(\Omega_\epsilon)} + \nu \norm{\nabla v_{,t}}^2_{L_2(\Omega_\epsilon^t)} \\
		\leq c_{\nu,\Omega'} \norm{f_{,t}}_{L_2(t_0,T;L_{\frac{6}{5}}(\Omega_\epsilon))}^2 + c_{\nu,\Omega'} \int_{t_0}^T\norm{v_{,t}(t)}_{L_2(\Omega_\epsilon)}^2\norm{\nabla v(t)}_{L_2(\Omega_\epsilon)}^4\, \ud t + \norm{v_{,t}(t_0)}^2_{L_2(\Omega_\epsilon)}.
	\end{multline}
	Next we see
	\begin{multline*}
		c_{\nu,\Omega'} \int_{t_0}^T\norm{v_{,t}(t)}_{L_2(\Omega_\epsilon)}^2\norm{\nabla v(t)}_{L_2(\Omega_\epsilon)}^4\, \ud t = c_{\nu,\Omega'} \int_{t_0}^T\norm{v_{,t}(t)}_{L_2(\Omega_\epsilon)}^2\norm{\nabla v(t)}_{L_2(\Omega_\epsilon)}^2\norm{\nabla v(t)}_{L_2(\Omega_\epsilon)}^2\, \ud t \\
		\leq c_{\nu,\Omega'} \sup_{t \in (t_0,T)}\norm{v_{,t}(t)}_{L_2(\Omega_\epsilon)}^2\norm{\nabla v}^2_{L_{\infty}(t_0,T;L_2(\Omega_\epsilon))} \int_{t_0}^T\norm{\nabla v(t)}_{L_2(\Omega_\epsilon)}^2\, \ud t.
	\end{multline*}
	By Lemmas \ref{lem3} and \ref{lem1} we rewrite \eqref{eq13} in the form
	\begin{multline*}
		\sup_{t\in (t_0,T)} \norm{v_{,t}(t)}^2_{L_2(\Omega_\epsilon)} + \nu \norm{\nabla v}^2_{L_{\infty}(t_0,T;L_2(\Omega^t))} \\
		\leq \epsilon c_{\nu,\Omega'} \sup_{t \in (t_0,T)}\norm{v_{,t}(t)}_{L_2(\Omega_\epsilon)}^2\norm{\nabla v}^2_{L_{\infty}(t_0,T;L_2(\Omega_\epsilon))} \left(\norm{v(t_0)}_{L_2(\Omega';L_{p_1}(0,\epsilon))}^2 + \norm{f}_{L_2(t_0,T;L_{\frac{6}{5}}(\Omega');L_{p_2}(0,\epsilon))}^2\right) \\
		  + c_{\nu,\Omega'} \norm{f_{,t}}_{L_2(t_0,T;L_{\frac{6}{5}}(\Omega_\epsilon))}^2 + \norm{v_{,t}(t_0)}^2_{L_2(\Omega_\epsilon)} + \nu\norm{\nabla v}_{L_2(\Omega_\epsilon^t)}^2 + \nu\norm{\nabla v(t_0)}^2_{L_2(\Omega_\epsilon)}.
	\end{multline*}
	Taking $\epsilon$ sufficiently small (see Remark below) ends the proof.
\end{proof}

\begin{rem}
	In the last Lemma we dealt with an inequality of the form
	\begin{equation*}
		x + y \leq \epsilon c x y + b,
	\end{equation*}
	where $x, y, b > 0$. We claim that for $\epsilon > 0$ sufficiently small the above inequality reduces to
	\begin{equation*}
		x + y \leq  b.
	\end{equation*}
	To prove it let us simplify the first inequality. Let $u := x + y$. Next, we utilize the Cauchy inequality on the right-hand side which gives
	\begin{equation*}
		u \leq \epsilon c x y + b \leq \frac{\epsilon c}{2}\left(x^2 + y^2\right) + b \leq \epsilon c u^2 + b.
	\end{equation*}
	We will show that for certain choice of $\epsilon < \epsilon*$ the function $u$ satisfies
	\begin{equation*}
		u = \epsilon c u^2 + b
	\end{equation*}
	or equivalently that $u$ is a fixed point to the mapping
	\begin{equation*}
		z = \epsilon c f(u) + b,
	\end{equation*}
	where $f(u) = u^2$. Indeed, let $u_n$ be a bounded sequence (we assume that a solution to \eqref{eq1} exists) such that
	\begin{equation*}
		u_{n + 1} = \epsilon c f(u_n) + b.
	\end{equation*}
	To check that $u_n$ satisfies the Cauchy condition we consider the difference $u_{n +1} - u_n$. We have
	\begin{multline*}
		\abs{u_{n + 1} - u_n} = \epsilon c\abs{f(u_n) - f(u_{n - 1})} = \epsilon c \abs{f'(u^*_n)} \abs{u_n - u_{n - 1}} \leq \epsilon c M\abs{u_n - u_{n - 1}} \\
		= \epsilon^2 c^2 M\abs{f(u_{n - 1}) - f(u_{n - 2})} = \epsilon^2 c^2 M \abs{f'(u^*_{n - 1})}\abs{u_{n - 1} - u_{n - 2}} \leq \ldots \leq \epsilon^n c^nM^n \abs{u_1 - u_0}.
	\end{multline*}
	Hence, for $\epsilon < \frac{1}{cM}$ the sequence $u_n$ is convergent to $u$ for $n \to \infty$. This ends the proof and Remark.
\end{rem}

\end{document}